\newtheorem{lemma}{Lemma}
\newtheorem{thm}[lemma]{Theorem}
\newtheorem{cor}[lemma]{Corollary}
\title{Bernoulli convolutions-2023}
\author{Nikita Sidorov}
\address{University of Great Northern Tower, 1 Watson Street, Manchester M3 4EE, United Kingdom}
\email{nikita.a.sidorov at gmail.com}
\dedicatory{To Erd\H os P\'al, the greatest mathematician of the last century}
\date{\today} 
\subjclass[2010]{28D20; 11R06.}
\keywords{Bernoulli convolution, Perron number, Pisot number, Salem number.}
\begin{document}

\begin{abstract}
Let $\theta\in(1,2)$, and $\mu_{\theta}$ be the 
Bernoulli convolution parametrized by $\theta$, that is, the measure
corresponding to the distribution of the random variable
$\sum_{n=1}^{\infty} a_n\theta^{-n}$, where the $a_n$ are i.i.d.
with probability of $a_n=0$ equal to $\frac12$.

As is well known, $\mu_\theta$ is either equivalent to
the Lebesgue measure on $\text{supp}(\mu_\theta)$,
or singular.

Recall that an algebraic integer $>1$ is called Pisot
if all its other Galois conjugates are smaller than 1 in modulus.
It is known that $\mu_\theta$ is singular with 
$\dim\mu_\theta<1$ if $\theta$ is Pisot. 

An algebraic integer $\theta$ greater than 1 is called a Salem number if
all its other Galois conjugates are of modulus 1, except
$\theta^{-1}$.

I shall prove that 

(1) $\dim\mu_\theta=1$ if $\theta$ is an algebraic non-Pisot number.

(2) if $\theta$ is Salem, then $\mu_\theta$
is equivalent to the Lebesgue measure on $\text{supp}(\mu_\theta)$,
with an unbounded density in $L^p(\text{supp}(\mu_\theta))$ for all $p<\infty$.

(3) Define
\[
\beta_{\theta,x,n}=\#\left\{a_1\dots a_n: \exists a_{n+1}\dots\text{such that\ } 
x=\sum_{k=1}^{\infty}a_n\theta^{-k}\right\}.
\]
Then 
\[
\lim_{n\to\infty}\sqrt[n]{\beta_{\theta,x,n}}=\theta^{\dim\mu_\theta}\text{\ for}\ \mu_\theta-\text{a.e.} x.
\]

(4) Put
\[
\bigcup_{n=1}^\infty\left\{\sum_{k=1}^{n}a_k\theta^k\mid a_k\in\{-1,0,1\}\right\}=
\{y_0(\theta)<y_1(\theta)<\cdots\},
\]
and
\[
\ell(\theta)=\liminf_{n\to\infty}(y_{n+1}(\theta)-y_n(\theta)).
\]
I shall present a short proof of De-Jun Feng's famous theorem stating that
$\ell(\theta)=0$ for all non-Pisot $\theta$.
\end{abstract}

\maketitle

If $\theta$ is transcendental, then $\dim\mu_\theta=1$ \cite{V}.
I shall assume henceforth $\theta$ to be an algebraic number.

Let $\theta_1=\theta,\dots,\theta_d$ be the Galois conjugates
of $\theta$ and $\mathcal M(\theta)
=\prod_{j: |\theta_j|>1} |\theta_j|$, the Mahler measure of $\theta$
if $\theta$ is an algebraic integer.

 Put
\[
D_n(\theta)=\left\{\sum_{k=1}^n a_k\theta^{k}\mid a_k\in\{0,1\}\right\},
\]
and $d_n(\theta)=\#D_n(\theta)$. Put 
\[
L(\theta)=\limsup_{n\to\infty}(y_{n+1}(\theta)-y_n(\theta)).
\]
Recall that $\theta$ is called {\sl Perron} if all other Galois
conjugates of $\theta$ are smaller than $\theta$ in modulus. 
By \cite[Theorem 2.1]{SS}, $L(\theta)=0$ if $\theta<\sqrt2$ and
$\theta$ is not Perron. 
\footnote{Condition (1) in \cite[Theorem~2.1]{SS} comes with an extra assumption that 
$-\theta$ is not a Galois conjugate of $\theta$ which means that
$\theta$ is not a root of a polynomial with no odd powers. I shall
deal with this case in the proof of Theorem~\ref{thm1}.} 
 This implies 
 \begin{equation}\label{0}
 d_n(\theta)\ge c_n(\theta)\cdot\theta^{n}\text{\ for some
unbounded\ } c_n(\theta)>0.
\end{equation} 
Let $\theta_2,\dots,\theta_{s+1}$ denote the non-real conjugates of $\theta=\theta_1$, with $s=0$ if there are none.

By the above and \cite[Lemma~1.51]{Ga}, there exists $C_\theta>0$ such that
for any interval $J$ with adjacent endpoints in $D_n(\theta)$,
\begin{equation}\label{1}
C(\theta)\cdot n^{-s}\mathcal M(\theta)^{-n}\le\mu_\theta(J)\le d_n(\theta)^{-1}\le c_n(\theta)^{-1}\theta^{-n}.
\end{equation}

\begin{thm}\label{thm1}
\begin{enumerate}
  \item $\dim\mu_\theta=1$ iff $\theta$ is not Pisot. 
  \item $\mu_\theta$ is equivalent to the Lebesgue measure on $\text{supp}\ \mu_\theta$
  if $\theta$ is Salem, with density in $L^p(\text{supp}(\mu_\theta))$ for all $p<\infty$.
\end{enumerate}
\end{thm}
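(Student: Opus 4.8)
The plan is to read both statements off the two–sided estimate \eqref{1} by passing to local dimensions. Throughout write $r_n=\theta^{-n}$ and let $\mathcal P_n$ be the partition of $\operatorname{supp}\mu_\theta$ into the intervals $J$ whose endpoints are adjacent points of the rescaled grid $\theta^{-n}\{\sum_{k=0}^{n-1}b_k\theta^k:b_k\in\{0,1\}\}$; one checks that $\theta\,\tilde D_n\subseteq\tilde D_{n+1}$ for $\tilde D_n=\{\sum_{k=0}^{n-1}b_k\theta^k\}$, so $(\mathcal P_n)$ is a refining filtration, and each $J\in\mathcal P_n$ is exactly an interval of the type appearing in \eqref{1}. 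For (1) I first dispose of the easy implication: if $\theta$ is Pisot then $\dim\mu_\theta<1$ is the classical result recalled before the theorem, so $\dim\mu_\theta=1$ forces $\theta$ non-Pisot. It remains to prove $\dim\mu_\theta=1$ whenever $\theta$ is not Pisot, and here I would first secure \eqref{0} and hence \eqref{1} in the cases not covered verbatim by \cite[Theorem~2.1]{SS}: the footnote case in which $-\theta$ is a conjugate (by passing to $\theta^2$ and exploiting the resulting convolution structure of $\mu_\theta$), and the cases $\theta\ge\sqrt2$ or $\theta$ Perron, where the separation input must instead be extracted from Garsia's separation lemma \cite[Lemma~1.51]{Ga}.

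Granting \eqref{1}, the bound $\dim\mu_\theta\ge1$ is obtained pointwise. Being a self-similar measure, $\mu_\theta$ is exact–dimensional, so it suffices to show $\liminf_{n}\frac{\log\mu_\theta(B(x,r_n))}{\log r_n}\ge1$ for $\mu_\theta$–a.e.\ $x$. Covering $B(x,r_n)$ by the members of $\mathcal P_n$ meeting it and inserting the upper estimate $\mu_\theta(J)\le c_n(\theta)^{-1}\theta^{-n}$ from \eqref{1} gives
\[
\mu_\theta\big(B(x,r_n)\big)\le \big(N_n(x)+1\big)\,c_n(\theta)^{-1}\theta^{-n},
\]
where $N_n(x)=\#\{J\in\mathcal P_n:J\cap B(x,r_n)\neq\emptyset\}$. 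Since $c_n(\theta)\to\infty$, the desired estimate $\mu_\theta(B(x,r_n))\le\theta^{-n(1-o(1))}$ follows as soon as $N_n(x)\le c_n(\theta)^{1+o(1)}$ for a.e.\ $x$. This is the crux: $N_n(x)$ counts how many rescaled grid points cluster within $r_n$ of $x$, and by Feng's theorem $\ell(\theta)=0$ such clusters genuinely occur.

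For the Salem case (2), a Salem number is the only conjugate outside the closed unit disc, so $\mathcal M(\theta)=\theta$ and \eqref{1} becomes two–sided at the \emph{same} exponential rate,
\[
C(\theta)\,n^{-s}\theta^{-n}\le\mu_\theta(J)\le c_n(\theta)^{-1}\theta^{-n},\qquad J\in\mathcal P_n,
\]
so every $J\in\mathcal P_n$ carries mass comparable to $\theta^{-n}$ up to the sub-exponential factors $n^{s}$ and $c_n(\theta)$. I would then form the martingale $f_n(x)=\mu_\theta(P_n(x))/|P_n(x)|$, where $P_n(x)\in\mathcal P_n$ contains $x$, establish uniform integrability, and identify $\lim_n f_n=d\mu_\theta/dx$; the lower bound yields positivity a.e.\ and hence mutual absolute continuity with Lebesgue on $\operatorname{supp}\mu_\theta$. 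The integrability $f\in L^p$ for all $p<\infty$ together with the unboundedness of $f$ should be read off from the interplay of the mass bound $\mu_\theta(J)\asymp\theta^{-n}$ with the widths $|J|$: these are comparable to $\theta^{-n}$ for most $J$ (giving the $L^p$ control) but drop to $o(\theta^{-n})$ along the sparse family produced by $\ell(\theta)=0$ (forcing $f$ unbounded).

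The main obstacle, in both parts, is exactly the passage from the mass estimate \eqref{1} on grid intervals to control of the measure of genuine balls, i.e.\ bounding the clustering quantity $N_n(x)$ — equivalently, ruling out concentration of $\mu_\theta$ at scale $\theta^{-n}$ — for a.e.\ $x$. When $\mathcal M(\theta)=\theta$ the two–sided form of \eqref{1} constrains the clustering enough to hope to run the argument; but when $\theta$ is non-Pisot with $\mathcal M(\theta)>\theta$ the lower bound in \eqref{1} degrades by the factor $(\mathcal M(\theta)/\theta)^{n}$ and no longer prevents super-polynomial clustering, so the pointwise estimate does not close by these means. I should also flag that even securing the input \eqref{0} for Salem numbers is delicate, since Salem numbers are Perron and therefore fall outside \cite[Theorem~2.1]{SS}. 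Overcoming these points — and, in the Salem case, upgrading exact dimension to genuine absolute continuity — is the heart of the matter and is where the argument must do its real work.
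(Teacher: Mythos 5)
Your proposal is a strategy whose two central steps are left open, and both are precisely the places where the paper's argument does something you never do. For part (1), you pass to balls $B(x,\theta^{-n})$ and reduce everything to the clustering bound $N_n(x)\le c_n(\theta)^{1+o(1)}$, which you then concede you cannot prove (and argue may be out of reach when $\mathcal M(\theta)>\theta$). The paper never passes to balls: it works throughout with the ratio $\log\mu_\theta(J)/\log|J|$ on grid intervals, i.e.\ with (\ref{ratio}), pairing the mass upper bound from (\ref{1}) with a \emph{length lower bound} $|J|\gg(\mathcal M(\theta)-\varepsilon)^{-n}$ obtained from Garsia's separation lemma \cite[Lemma~1.51]{Ga}, together with R\'enyi's bound $|J|\ll\theta^{-n}$ \cite{Re}. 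That separation lemma is exactly the missing ingredient in your scheme: a lower bound on the minimal gap in $D_n(\theta)$ is the only available cap on your $N_n(x)$. You do cite the lemma, but only as a substitute source for the counting estimate (\ref{0}); its actual role in the paper is to bound $|J|$ from below. You also omit cases the paper must treat separately: non-algebraic-integer $\theta$ (all sums in $D_n(\theta)$ distinct, so the Garsia entropy is $\log_\theta 2>1$ and Hochman's formula \cite{H} finishes), and $\theta\ge\sqrt2$, which the paper reduces to $\theta<\sqrt2$ via the convolution identity $\mu_\theta=\mu_{\theta^2}*\theta\mu_{\theta^2}$ rather than by any separation input, as you suggest.

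For part (2), your martingale plan ($f_n=\mu_\theta(P_n(x))/|P_n(x)|$, uniform integrability, $L^p$ control) is announced but not executed, and its required input is exactly what is absent: a lower bound on $\mu_\theta(J)$ strong enough to beat the polynomial loss $n^{-s}=n^{2-d}$ in (\ref{1}). The whole content of the paper's part (2) is the strengthened estimate (\ref{salem}), $\mu_\theta(J)\gg n^{1-d/2}\theta^{-n}$, proved via the trace/Chebysh\"ev estimate (\ref{3}) for the conjugates on the unit circle combined with the norm inequality $|p(\theta)|\ge\bigl|\prod_{j\ge2}p(\theta_j)\bigr|^{-1}$; nothing in your proposal produces this, and without it neither positivity of the density nor the $L^p$ claim follows. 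Two of your observations are nonetheless legitimate and worth keeping: Salem numbers are Perron, hence outside \cite[Theorem~2.1]{SS}, so the availability of (\ref{0})--(\ref{1}) in the Salem case needs independent justification; and the passage from grid intervals to genuine balls (equivalently, the assertion that (\ref{ratio}) ``suffices'') degrades by a factor of order $(\mathcal M(\theta)/\theta)^n$, a difficulty that the paper's own reduction does not escape. But flagging these difficulties is not the same as resolving them: as written, your argument establishes neither (1) nor (2).
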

\begin{proof} (1) It suffices to show that for any interval $J$ with adjacent
endpoints in $D_n(\theta)$,
\begin{equation}\label{ratio}
  \frac{\log\mu_\theta(J)}{\log|J|}\ge1-\varepsilon
\end{equation} 
for any $\varepsilon>0$. 

Let us perform some reductions.

We may assume $-\theta$ is not a Galois conjugate of $\theta$,
otherwise it is a root of a polynomial with no odd powers,
so we keep extracting square roots of $\theta$ until we get $\alpha$
that is rid of this constraint. Since $\theta$ is a positive power of $\alpha$, 
$\ \alpha$ being fully-dimensional implies
$\theta$ being such.

It suffices to prove the claim for $\theta<\sqrt2$, in
view of $\mu_{\theta}=\mu_{\theta^2}*\theta\mu_{\theta^2}$. 

Thus, if $\theta$ is not Perron, then (\ref{1}) holds.
What is left is to obtain a lower bound for $|J|$ from \cite[Lemma~1.51]{Ga},
namely, a constant times the inverse of the product of the moduli of the other Galois
conjugates of $\theta$. Therefore, $|J|\gg (\mathcal M(\theta)-\varepsilon))^{-n}$.
Since $|J|\ll\theta^{-n}$ for all $\theta$ \cite{Re}, this proves (\ref{ratio}).

Assume now $\theta$ to be Perron and neither Pisot nor Salem. Then $\mathcal M(\theta)>\theta$,
and the same argument applies.

Finally, assume that $\theta$ is not an algebraic integer. Then all sums in $D_n(\theta)$ are distinct, 
whence the Garsia entropy $H_\theta=\log_\theta 2>1$ so $\theta$ is fully-dimensional
by {\sc Michael Hochman}'s celebrated formula $\dim\mu_\theta=\min\{H_\theta,1\}$ \cite{H}.

It suffices to recall that $\dim\mu_\theta<1$ for a Pisot $\theta$ by \cite{Ga} and \cite{H}.

\medskip\noindent 
(2). Assume $\theta$ to be Salem. Here $\mathcal{M}(\theta)=\theta$, and $s=d-2$, and $d/2\in\mathbb Z$, so
  (\ref{1}) is
  \[
  n^{2-d}\theta^{-n}\ll\mu_\theta(J)\ll\theta^{-n}.
  \]
  I shall prove a stronger claim:
\begin{equation}\label{salem}
  \mu_\theta(J)\gg n^{1-d/2}\theta^{-n}.
\end{equation}
  We have
\begin{equation}\label{3}
\sum_{k=1}^{n} \text{Re}(\theta_j^k)=\sum_{k=1}^{n}T_j(\cos\text{arg}\theta_j) \asymp n,
\end{equation}
where $T_j(t)=\cos\arccos jt$ is the $j$th Chebysh\"ev polynomial. The lower bound comes from $\alpha$ being badly approximable by the Roth theorem.

If $p$ is a polynomial in $\mathbb{Z}[x]$ with its coefficients bounded by 1
such that $p(\theta)\neq0$, then $p(\theta)p(\theta_2)\ldots p(\theta_d)\in\mathbb{Z}\setminus\{0\}$, whence
\[
|p(\theta)|\ge \left|\prod_{j=2}^{d}p(\theta_j)\right|^{-1}\gg  n^{1-d/2}\theta^{n}.
\]
(\ref{salem}) is proved. Hence
  \begin{equation}\label{log}
  \frac{\mu_\theta(J)}{|J|}\asymp \log|J|,  
  \end{equation}
  whence $\mu_\theta$ is equivalent to the Lebesgue measure,
  with the density $\delta_\theta\in L^p(\text{supp}\ (\mu_\theta))$ for any finite $p$.
  \end{proof}
  
  \medskip\noindent
  {\bf Remark.}
  $\delta_\theta$ is 0 on half the points of $\bigcup_{n=1}^\infty D_n(\theta)$ and $\infty$ on
  the rest. Therefore, $\delta_\theta$ is unbounded and discontinuous everywhere. I leave the details
  to the interested reader.
  
  The set $\{\|\theta^n\|\}_{n\ge1}$ is known to be dense but not equidistributed in $[0,1]$.
  Theorem~\ref{thm1} gives a more detailed view.
    
As a consequence of \ref{3}, follows
    \begin{cor}
    For any algebraic integer $\theta$,
    \[
    \text{tr}(\theta^n)=\sum_{j:|\theta_j|>1} |\theta_j|^n + O(n^{s/2}),\quad n\to\infty.
    \]
    This is obvious for the hyperbolic $\theta$ (with $s=0$) but novel -- to my best knowledge 
    -- for the non-hyperbolic $\theta$.
    \end{cor}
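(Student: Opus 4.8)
The plan is to start from the classical fact that $\text{tr}(\theta^n)=\sum_{j=1}^{d}\theta_j^n$ is a rational integer, being the $n$-th Newton power sum of the roots of the monic, separable, integer minimal polynomial of $\theta$; separability guarantees distinct conjugates, so no repeated roots and hence no polynomial growth factors intrude. I would then partition the conjugates into the expanding set $E=\{j:|\theta_j|>1\}$, the unitary set $U=\{j:|\theta_j|=1\}$, and the contracting set $C=\{j:|\theta_j|<1\}$, and dispose of $\sum_{j\in C}\theta_j^n$ and $\sum_{j\in U}\theta_j^n$ first, since it is exactly these that must be shown to be $O(n^{s/2})$. For the contracting block, $\bigl|\sum_{j\in C}\theta_j^n\bigr|\le(\#C)\rho^{n}$ with $\rho=\max_{j\in C}|\theta_j|<1$, which is $o(1)$. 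Every $j\in U$ is necessarily non-real, since a real conjugate on the unit circle would be $\pm1$ and force $\theta=\pm1$; hence $U$ splits into complex-conjugate pairs $e^{\pm i\phi}$ and $\sum_{j\in U}\theta_j^n=\sum_{\text{pairs}}2\cos(n\phi)$, a bounded oscillating sum of at most $\#U/2\le s/2$ terms. The contracting and unitary blocks therefore contribute only $O(1)$, which sits comfortably inside the claimed $O(n^{s/2})$.

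It then remains to match $\sum_{j\in E}\theta_j^n$ against the stated main term $\sum_{j\in E}|\theta_j|^n$. Writing each expanding conjugate in polar form, the per-conjugate discrepancy is $\theta_j^n-|\theta_j|^n=|\theta_j|^n\bigl(e^{in\arg\theta_j}-1\bigr)$, which vanishes identically whenever $\theta_j>0$ is real. In particular the dominant conjugate $\theta_1=\theta$ contributes $|\theta|^n$ on the nose, and so does every positive real expanding conjugate. This is where (\ref{3}) enters the plan: it quantifies the cancellation in the partial sums of $\text{Re}(\theta_j^k)$ along the non-real directions, the mechanism that renders the oscillatory contributions negligible and isolates the pure modulus terms $|\theta_j|^n$ as the surviving main term.

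The hard part, and the genuine obstacle, is the expanding conjugates that are \emph{not} positive reals. If some $\theta_j$ with $|\theta_j|>1$ is negative or properly complex then $\arg\theta_j\notin2\pi\mathbb Z$, and the discrepancy $|\theta_j|^n\bigl(e^{in\arg\theta_j}-1\bigr)$ is itself of exponential size $\asymp|\theta_j|^n$, far too large to be absorbed into a polynomial error $O(n^{s/2})$. I therefore expect the identity as written to be provable only under the standing hypothesis that every conjugate of modulus $>1$ is a positive real --- precisely the situation of Salem numbers, where $E=\{1\}$ and $\theta_1=\theta>0$, and of real-expanding Perron numbers, which are the cases feeding Theorem~\ref{thm1}. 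Under that hypothesis the three blocks combine to give $\text{tr}(\theta^n)=\sum_{j\in E}|\theta_j|^n+O(1)$, hence a fortiori the claimed $+O(n^{s/2})$; the honest way to cover the general algebraic integer is either to impose this hypothesis or to read the main term as $\text{Re}\sum_{j\in E}\theta_j^n$, for which the very same argument delivers the sharper error $O(1)$.
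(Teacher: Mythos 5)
Your three-block decomposition is correct as far as it goes, and your diagnosis of where it breaks is exactly right --- but you can be bolder: the corollary as stated is not merely resistant to your method, it is false, and there is no paper proof for you to be measured against. The paper offers only the words ``As a consequence of (\ref{3}), follows'', and (\ref{3}) is itself erroneous: for a unit-modulus conjugate $\theta_j=e^{i\phi}$ one has $\sum_{k=1}^{n}\mathrm{Re}(\theta_j^k)=\sum_{k=1}^{n}\cos k\phi$, which is \emph{bounded} by geometric summation (for $\phi\not\equiv 0 \pmod{2\pi}$), not $\asymp n$; the formula ``$T_j(t)=\cos\arccos jt$'' garbles the Chebyshev identity $T_k(\cos\phi)=\cos k\phi$ (the index should be $k$, not $j$); and Roth's theorem, which concerns rational approximation of algebraic numbers, supplies no such lower bound. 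So your charitable reading of (\ref{3}) as ``quantifying cancellation'' assigns it a role it cannot play: as written it asserts linear growth, the opposite of cancellation, and your own unitary-block estimate ($O(1)$ from at most $s/2$ cosine pairs) already refutes it. Your handling of the contracting block ($o(1)$) and the unitary block ($O(1)$) is the honest argument the paper lacks.

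For the expanding block, replace your hedged ``I therefore expect'' with a counterexample: take $\theta=\sqrt2$, an algebraic integer with conjugates $\pm\sqrt2$ and $s=0$. Then $\mathrm{tr}(\theta^n)=(1+(-1)^n)\,2^{n/2}$, while the claimed main term is $\sum_{j:|\theta_j|>1}|\theta_j|^n=2^{n/2+1}$, so the ``error'' equals $-2^{n/2+1}$ for every odd $n$, nothing like $O(n^{0})$. Note this even contradicts the paper's parenthetical claim that the hyperbolic case with $s=0$ is ``obvious'', and it is precisely the $-\theta$-conjugate situation the paper elsewhere excuses itself from by passing to square roots. A properly complex expanding pair fails the same way: if $|\theta_2|>1$ and $\theta_2=|\theta_2|e^{i\phi}$ is non-real, the claimed identity would force $|\theta_2|^n(\cos n\phi-1)=O(n^{s/2})$, hence $e^{in\phi}\to1$, hence $e^{i\phi}=1$, a contradiction (e.g.\ the complex roots of $x^3-2x-2$, of modulus about $1.06$). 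Your two repairs are the correct ones: either assume every conjugate of modulus $>1$ is positive real --- which covers the Salem case actually relevant to Theorem~\ref{thm1}, where $E=\{1\}$ --- or take the main term to be $\sum_{j:|\theta_j|>1}\theta_j^n$ itself, in which case your argument yields the sharper error $O(1)$ and the $n^{s/2}$ allowance is never needed. In short: your proposal is a correct proof of a corrected statement, and a correct refutation of the stated one; the paper's derivation is a pointer to a false display.
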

It follows from (\ref{salem}) that $d_n(\theta)\gg n^{d/2-1}\theta^{n}$
for a Salem $\theta$.
An upper bound is not needed for Theorem~\ref{thm1}, yet I believe it may be
of interest. 
    
\medskip\noindent\textbf{Towards absolute continuity.}
\begin{itemize}
  \item Every absolutely continuous measure is fully-dimensional but not necessarily
the other way round. The difference is crucial: ``dropping the logs'' --
compare (\ref{ratio}) with (\ref{log}).
  
  \item (\ref{1}) is crude, yet sufficient for
our purposes. The general question is to study the
quantity $$d_\theta=\lim_{n\to\infty}\sqrt[n]{d_n(\theta)}.$$
(The limit exists because $d_{n+k}(\theta)\le d_n(\theta)d_k(\theta)$ for all $n,k\ge1$.)
I believe knowing the exact value of $d_\theta$ should build a such-needed bridge from
dimension one to absolute continuity of Bernoulli convolutions -- about which we
know very little, generic results notwithstanding.
 
Namely, we only know that $\mu_\theta$ is absolutely
continuous with respect -- and hence equivalent by
\cite{S} -- to the Lebesgue measure
on the support of $\mu_\theta$ if $\theta$ is a {\sl Garsia number}, that is,
an algebraic integer with all Galois conjugates of modulus greater than~1
and their product being $\pm2$ as well as some algebraic
$\theta$ {\sl very} close to 1 recently discovered by {\sc P\'eter Varj\'u} \cite{V2}.

Notice that in the former case, $\theta<d_\theta=2<\mathcal M(\theta)$ unless $\theta=\sqrt{2}$.
This gives us hope for the case of algebraic $\theta$. 

\item We know from (\ref{1}) only that $$\theta\le d_\theta\le\mathcal M(\theta),$$
which proves sufficient when $\theta$ is Salem since in this case,
$\mathcal M(\theta)=\theta$. 
\footnote{In fact, (\ref{1}) only works because
either $\mathcal M(\theta)>\theta$, or $\theta$ is Salem, in which case
$s>0$.}
\item If $\theta$ is superexponentially well approximable, {\sc Michael Hochman}'s techniques from
\cite{H} may help proving absolute continuity of $\mu_\theta$.
\end{itemize} 

\medskip\noindent
(3)
As a direct corollary of Theorem~\ref{thm1}, we obtain a result that significantly
extends \cite[Theorem~1.1]{FS}.

\begin{thm}
  \[
\lim_{n\to\infty}\sqrt[n]{\beta_{\theta,x,n}}=\theta
\]
if $\theta$ is not Pisot and $\theta^{\dim\mu_\theta}$ otherwise.
\end{thm}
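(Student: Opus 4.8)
The plan is to deduce the statement from Theorem~\ref{thm1} together with the exact dimensionality of $\mu_\theta$. Since $\dim\mu_\theta=1$ precisely when $\theta$ is not Pisot by Theorem~\ref{thm1}(1), the two alleged values coincide ($\theta^{\dim\mu_\theta}=\theta$ off the Pisot locus), so the whole claim collapses into the single assertion
\[
\lim_{n\to\infty}\tfrac1n\log\beta_{\theta,x,n}=\dim\mu_\theta\cdot\log\theta\qquad\text{for }\mu_\theta\text{-a.e. }x.
\]
First I would record the geometric meaning of $\beta_{\theta,x,n}$. Writing $f_i(t)=(t+i)/\theta$ for the maps of the underlying IFS, a prefix $a_1\dots a_n$ is admissible for $x$ exactly when $x$ lies in the cylinder $K_{a_1\dots a_n}=f_{a_1}\circ\cdots\circ f_{a_n}(\operatorname{supp}\mu_\theta)$, an interval of length $\theta^{-n}/(\theta-1)$. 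Thus $\beta_{\theta,x,n}$ is the number of level-$n$ cylinders of the IFS through $x$; all of them are nested inside $B(x,2\theta^{-n}/(\theta-1))$, and each carries Bernoulli mass $2^{-n}$.

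The engine is exact dimensionality. By the Feng--Hu theorem on exact dimensionality of self-similar measures, $\mu_\theta$ is exact dimensional, so for $\mu_\theta$-a.e.\ $x$
\[
\lim_{n\to\infty}\frac{\log\mu_\theta(B(x,\theta^{-n}))}{-n\log\theta}=\dim\mu_\theta,
\]
and by {\sc Hochman}'s formula \cite{H} this common value equals $\min\{H_\theta,1\}$, which Theorem~\ref{thm1}(1) identifies with $1$ off the Pisot locus. Hence the target is equivalent to
\[
\beta_{\theta,x,n}=\mu_\theta(B(x,\theta^{-n}))^{-1+o(1)}\qquad\text{for }\mu_\theta\text{-a.e. }x,
\]
that is, the number of admissible prefixes through a $\mu_\theta$-typical point should be, on the exponential scale, the reciprocal of the local mass. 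This is the natural sense in which the theorem is a corollary of Theorem~\ref{thm1}, and it is the exact extension of \cite[Theorem~1.1]{FS} from its Lebesgue-typical setting to the $\mu_\theta$-typical one.

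To reach it I would first show $\tfrac1n\log\beta_{\theta,x,n}$ converges $\mu_\theta$-a.e.\ to a constant. The renewal structure at level $n$ gives a submultiplicative cocycle: if $a_1\dots a_{n+k}$ is admissible for $x$ then $a_1\dots a_n$ is admissible for $x$ and $a_{n+1}\dots a_{n+k}$ is admissible for $f_{a_1\dots a_n}^{-1}(x)\in\operatorname{supp}\mu_\theta$, whence
\[
\beta_{\theta,x,n+k}\le\beta_{\theta,x,n}\cdot\max_{y}\beta_{\theta,y,k},
\]
so Kingman's subadditive ergodic theorem, over the shift with its ergodic Bernoulli coding, yields an a.e.\ constant limit $\Lambda(\theta)$. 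The remaining, and genuinely combinatorial, content is the evaluation $\Lambda(\theta)=\dim\mu_\theta\cdot\log\theta$, which I would attempt by comparing the admissible cylinders through $x$ with the packing of $B(x,\theta^{-n})$ and feeding in the cylinder-mass bounds \eqref{1} to convert overlap multiplicities into local mass.

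The hard part will be exactly this evaluation of $\Lambda(\theta)$. A crude count of the cylinders through $x$ produces the combinatorial factor $2^{n}$ weighted by the window mass $\mu_\theta(B(x,\theta^{-n}))$, and the delicate point is to show that for $\mu_\theta$-typical $x$ the overlap multiplicities of the admissible cylinders organize so that this collapses on the exponential scale to $\mu_\theta(B(x,\theta^{-n}))^{-1}\asymp\theta^{n\dim\mu_\theta}$ rather than to the competing product. Equivalently, one must control the fluctuation of the Garsia-type atom counts of the partial-sum distribution across the $\theta^{-n}$-window around a typical $x$; this is where Theorem~\ref{thm1} and exact dimensionality have to be combined quantitatively, and I expect this matching of exponents to be the crux of the whole argument.
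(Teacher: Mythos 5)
Your reduction and opening setup are reasonable, but the step you defer --- the evaluation $\Lambda(\theta)=\dim\mu_\theta\cdot\log\theta$ --- is not merely the hard part: it fails, and your own observations show why. Each admissible cylinder $K_w\ni x$ of level $n$ has length $\theta^{-n}/(\theta-1)$, hence lies in $B\bigl(x,\theta^{-n}/(\theta-1)\bigr)$, and by self-similarity $\mu_\theta=2^{-n}\sum_{|w|=n}\mu_\theta\circ f_w^{-1}$, where the $w$-th summand gives full mass $1$ to $K_w$. Summing over the $\beta_{\theta,x,n}$ admissible words $w$ gives
\[
\mu_\theta\bigl(B(x,\theta^{-n}/(\theta-1))\bigr)\ \ge\ 2^{-n}\,\beta_{\theta,x,n},
\]
and the exact dimensionality you invoked then yields $\limsup_n\beta_{\theta,x,n}^{1/n}\le 2\,\theta^{-\dim\mu_\theta}$ for $\mu_\theta$-a.e.\ $x$. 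This bound reproduces the Lebesgue-a.e.\ value $2/\theta$ of \cite[Theorem~1.1]{FS} in the absolutely continuous case, but it is strictly smaller than the asserted value $\theta^{\dim\mu_\theta}$ whenever $\theta^{2\dim\mu_\theta}>2$; in particular for any non-Pisot $\theta>\sqrt2$ the claimed limit $\theta$ is unattainable, and for a Garsia parameter such as the real root of $x^3-2$ (where $\mu_\theta$ is equivalent to Lebesgue, so ``$\mu_\theta$-a.e.'' and ``Lebesgue-a.e.'' coincide) \cite{FS} already gives $2/\theta\ne\theta$. The consistent FS-type prediction is $\beta_{\theta,x,n}\approx 2^n\mu_\theta(B(x,\theta^{-n}))$, i.e.\ growth rate $2\theta^{-\dim\mu_\theta}$, not your intermediate target $\beta_{\theta,x,n}=\mu_\theta(B(x,\theta^{-n}))^{-1+o(1)}$; the two agree only when $\dim\mu_\theta=\tfrac12\log_\theta 2$, e.g.\ at $\theta=\sqrt2$. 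So the exponent-matching you postpone cannot be carried out as stated.

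Two further points. First, your Kingman step is also flawed: the inequality $\beta_{\theta,x,n+k}\le\beta_{\theta,x,n}\cdot\max_y\beta_{\theta,y,k}$ is not a subadditive cocycle over the shift (the maximum decouples the second factor from the orbit of $x$), so it only gives convergence of $\frac1n\log\sup_x\beta_{\theta,x,n}$; an a.e.\ constant limit needs a genuinely different argument, as in \cite{FS}. Second, for comparison with the source: the paper contains no proof of this statement at all --- it is declared ``a direct corollary of Theorem~\ref{thm1}'', meaning the formula $\lim_n\frac1n\log\beta_{\theta,x,n}=\dim\mu_\theta\log\theta$ is imported without justification and only the substitution $\dim\mu_\theta=1$ off the Pisot locus is supplied. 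You were right to isolate that formula as the entire content of the theorem; the computation above indicates that it is not just unproven but wrong as stated, the tenable version of the a.e.\ limit being $2\theta^{-\dim\mu_\theta}$.
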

(4) 
 \begin{thm}\cite{F}
   $\ell(\theta)>0$ iff $\theta$ is Pisot. 
 \end{thm}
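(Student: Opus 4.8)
The plan is to prove the two implications separately; the ``if'' direction is elementary and the ``only if'' direction carries all the weight. For the easy direction, suppose $\theta$ is Pisot. Every gap $y_{n+1}-y_n$ equals $|q(\theta)|$ for some nonzero $q\in\mathbb{Z}[x]$ with $q(0)=0$ and coefficients in $\{-2,-1,0,1,2\}$, namely the difference of the two defining $\{-1,0,1\}$-polynomials. Since $\theta$ is an algebraic integer, so is $q(\theta)$, whence its norm $\prod_{j=1}^{d}q(\theta_j)\in\mathbb{Z}\setminus\{0\}$ and $|q(\theta)|\ge\prod_{j=2}^{d}|q(\theta_j)|^{-1}$. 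The Pisot hypothesis $|\theta_j|<1$ for $j\ge2$ bounds each factor by $|q(\theta_j)|\le 2\sum_{k\ge1}|\theta_j|^{k}=2|\theta_j|/(1-|\theta_j|)$, uniformly in $\deg q$, so $|q(\theta)|$ is bounded below by a positive constant depending only on $\theta$. Hence $\ell(\theta)>0$.

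For the converse I would assume $\theta$ is not Pisot and, seeking a contradiction, that $\ell(\theta)=2\delta>0$. By the definition of $\liminf$ there is $N_0$ with $y_{n+1}-y_n>\delta$ for all $n\ge N_0$, so the spectrum $X(\theta):=\bigcup_n\{\sum_{k=1}^{n}a_k\theta^{k}:a_k\in\{-1,0,1\}\}$ is $\delta$-separated outside a finite set; consequently $\#\bigl(X(\theta)\cap[-R,R]\bigr)=O(R)$. As the degree-$n$ sums lie in $[-C\theta^{n},C\theta^{n}]$, the number $\widetilde d_n$ of distinct $\{-1,0,1\}$-sums of degree $\le n$ satisfies $\widetilde d_n=O(\theta^{n})$, and since $D_n(\theta)\subseteq X(\theta)$ also $d_n(\theta)\le\widetilde d_n=O(\theta^{n})$. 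The whole task thus reduces to showing that $d_n(\theta)/\theta^{n}$ is unbounded for every non-Pisot algebraic $\theta\in(1,2)$.

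For the lower bound on $d_n$ I would follow the case division of Theorem~\ref{thm1}. When $\theta$ is not an algebraic integer, a coincidence of two $\{0,1\}$-sums would exhibit $\theta$ as a root of a nonzero integer polynomial with leading coefficient $\pm1$, forcing $\theta$ to be an algebraic integer; hence there are no collisions, $d_n(\theta)=2^{n}$, and $d_n/\theta^{n}=(2/\theta)^{n}\to\infty$. When $\theta$ is Salem, \eqref{salem} already yields $d_n(\theta)\gg n^{d/2-1}\theta^{n}$ with $d\ge4$, so $d_n/\theta^{n}\gg n\to\infty$. In the remaining range — a non-Perron $\theta$, or a Perron $\theta$ that is neither Pisot nor Salem, where $\mathcal M(\theta)>\theta$ — the estimates behind \eqref{0} and \eqref{1}, together with the handling of a possible $-\theta$ conjugate as in the proof of Theorem~\ref{thm1} (non-Pisotness passing to roots since powers of Pisot numbers are Pisot), give $d_n(\theta)\ge c_n(\theta)\theta^{n}$ with $c_n(\theta)$ unbounded. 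In every case $d_n/\theta^{n}$ is unbounded, contradicting $d_n=O(\theta^{n})$, and therefore $\ell(\theta)=0$.

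The main obstacle is the Salem case, which is exactly what makes Feng's theorem nontrivial: there the exponential rate $d_\theta=\lim\sqrt[n]{d_n(\theta)}$ equals $\theta$, precisely as for Pisot numbers, so no purely exponential count can separate the two classes and one is driven down to the subexponential factor $n^{d/2-1}$ supplied by the Chebysh\"ev/Roth estimate \eqref{3} — this is where $s>0$ is indispensable, and establishing that sharp bound is the real work. A secondary difficulty is the passage to general $\theta$ in the $\mathcal M(\theta)>\theta$ case: the dimension reduction to $\theta<\sqrt2$ rests on monotonicity of dimension under $\mu_\theta=\mu_{\theta^2}*\theta\mu_{\theta^2}$, which has no counterpart for the counting function, since the inclusion $X(\theta)\subseteq X(\sqrt\theta)$ runs the wrong way for gaps; for such $\theta$ one must instead produce $d_n(\theta)\gg\theta^{n}$ directly, e.g.\ by a Mahler-measure pigeonhole in the conjugate embeddings exploiting the expanding conjugate, or by applying the separation estimates of \cite{SS} at $\theta$ itself. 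Once $d_n/\theta^{n}\to\infty$ is secured in all cases, the contradiction with $\delta$-separation closes the argument.
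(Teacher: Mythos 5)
Your overall plan coincides with the paper's: the Pisot direction by Garsia's norm/separation argument (the paper treats this as classical and does not even write it out; your version is complete and correct), and the non-Pisot direction by showing that $d_n(\theta)/\theta^n$ is unbounded and feeding this to the pigeonhole principle. Your contrapositive phrasing ($\delta$-separation forces $d_n(\theta)=O(\theta^n)$) is the same pigeonhole read backwards, and your case split --- non-integer, Salem via (\ref{salem}), the rest via (\ref{0})--(\ref{1}) --- is exactly the paper's.

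The genuine gap is the one you concede in your final paragraph, and it is not a removable technicality. Your counting input exists only under the hypotheses of \cite[Theorem~2.1]{SS}: $\theta<\sqrt2$, $\theta$ not Perron, and $-\theta$ not a Galois conjugate (this is what (\ref{0}) rests on); likewise the paper's assertion $d_n(\theta)\ge c_n\theta^n$ for a Perron, non-Pisot, non-Salem $\theta$ is made only for $\theta<\sqrt2$. Hence a non-Pisot, non-Salem algebraic unit with $\theta\in[\sqrt2,2)$, or one having $-\theta$ among its conjugates, is covered by none of your three cases. You correctly diagnose why the paper's own repair cannot be borrowed --- the inclusion of spectra runs the wrong way, so ``$\ell(\sqrt\theta)=0$ implies $\ell(\theta)=0$'' and the square-root extraction transfer neither to gaps nor to counting --- but a correct criticism of that step is not a substitute for it. Your two proposed fixes do not close the hole: ``applying the separation estimates of \cite{SS} at $\theta$ itself'' is impossible precisely because those estimates require $\theta<\sqrt2$ and non-Perron (that is why a reduction is needed at all), and the ``Mahler-measure pigeonhole in the conjugate embeddings'' is a hope, not an argument. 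There is also an internal inconsistency: mid-proof you claim the $-\theta$-conjugate case is handled ``as in the proof of Theorem~\ref{thm1}'', yet your final paragraph explains why exactly that reduction has no counting counterpart. (A small patch you could make for free: your non-integer argument covers every $\theta$ that is not a root of any height-one polynomial, since a collision of $\{0,1\}$-sums produces such a polynomial; this recovers the paper's ``not height one'' case. But the hard case --- height-one units that are Perron or exceed $\sqrt2$ or have $-\theta$ as a conjugate --- remains open in your write-up, and it is precisely where the theorem's difficulty sits.)
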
 
 \begin{proof}
   If $\theta$ is not height one, the claim follows from the pigeonhole principle. 

\medskip\noindent
Let now $\theta$ be height one -- and therefore, an algebraic unit. 
Then
$\ell(\theta)=0$ -- if 
\begin{itemize}
  \item $\theta<\sqrt2$;
  \item not Perron;
  \item $-\theta$ is not a Galois conjugate of $\theta$ which means that
$\theta$ is not a root of a polynomial with no odd powers;
\end{itemize}
I shall now remove these constraints and complete the proof. 

\medskip\noindent
If $\theta<\sqrt2$ and a root of a polynomial with no odd powers,
we keep extracting square roots of $\theta$ until we get $\nu$
that is rid of this constraint. Then $\ell(\nu)=0$ implies $\ell(\theta)=0$.

\medskip\noindent
If $\theta=\sqrt2$, then $\ell(\theta)=0$ by the pigeonhole
principle. 

\medskip\noindent
If $\theta>\sqrt2$, then $\ell(\sqrt\theta)=0$ implies
$\ell(\theta)=0$. 

\medskip\noindent
If $\theta<\sqrt2$ and a non-Pisot Perron,  
its Mahler measure is
$\ge \theta$ and $>\theta$ unless $\theta$ is Salem. It suffices to recall that
$d_n(\theta)\ge c_n \theta^n$ with an unbounded $c_n$
and apply the pigeonhole principle. 
 \end{proof}

\end{document}